\newtheorem{theorem}{Theorem}
\newtheorem{lemma}{Lemma}
\newtheorem{corollary}{Corollary}
\newtheorem{definition}{Definition}
\theoremstyle{remark}
\numberwithin{equation}{section}
\def\ci{\perp\!\!\!\perp}
\def\@strippedMR{}
\def\@scanforMR#1#2#3\endscan{%
  \ifx#1M\ifx#2R\def\@strippedMR{#3}%
  \else\def\@strippedMR{#1#2#3}%
  \fi\fi}
\renewcommand\MR[1]{\relax
  \ifhmode\unskip\spacefactor3000 \space\fi
  \@scanforMR#1\endscan
  \MRhref{\@strippedMR}{\@strippedMR}}
\title{PC algorithm for Gaussian copula graphical models}  
\author[N.~Harris]{Naftali Harris} 
\address{Department of Statistics, The University of Chicago, Chicago,
  IL, U.S.A.}
\email{naftali@uchicago.edu}
\author[M.~Drton]{Mathias Drton} 
\address{Department of Statistics, The University of Chicago, Chicago,
  IL, U.S.A.}
\email{drton@uchicago.edu}
\begin{document}

\begin{abstract}
  The PC algorithm uses conditional independence tests for model
  selection in graphical modeling with acyclic directed graphs.  In
  Gaussian models, tests of conditional independence are typically
  based on Pearson correlations, and high-dimensional consistency
  results have been obtained for the PC algorithm in this setting.  We
  prove that high-dimensional consistency carries over to the broader
  class of Gaussian copula or \textit{nonparanormal} models when using
  rank-based measures of correlation.  For graphs with bounded degree,
  our result is as strong as prior Gaussian results.  In simulations,
  the `Rank PC' algorithm works as well as the `Pearson PC' algorithm
  for normal data and considerably better for non-normal Gaussian
  copula data, all the while incurring a negligible increase of
  computation time.  Simulations with contaminated data show that rank
  correlations can also perform better than other robust estimates
  considered in previous work when the underlying distribution does
  not belong to the nonparanormal family.
\end{abstract}

\keywords{Copula, covariance matrix, graphical model, model selection,
  multivariate normal distribution, nonparanormal distribution} 

\maketitle

\section{Introduction}
\label{sec:introduction}

Let $G=(V,E)$ be an acyclic digraph with finite vertex set.  We will
typically write $v\to w\in E$ to indicate that $(v,w)$ is an edge in
$E$.  The digraph $G$ determines a statistical model for the joint
distribution of a random vector $X=(X_v)_{v \in V}$ by requiring that
$X$ satisfy conditional independence relations that are natural if the
edges in $E$ encode causal relationships among the random variables
$X_v$.  We refer the reader to
\cite{lauritzen:1996,pearl:2009,spirtes:2000} or
\cite[Chap.~3]{oberwolfach} for background on statistical modeling
with directed graphs.  As common in this field, we use the
abbreviation DAG (for `directed acyclic graph') to refer to acyclic
digraphs.

The conditional independences associated with the graph $G$ may be
determined using the concept of d-separation.  Since a DAG contains at
most one edge between any two nodes, we may define a path from a node
$u$ to a node $v$ to be a sequence of distinct nodes $(v_0, v_1,
\ldots , v_n)$ such that $v_0=u$, $v_n=v$ and for all $1\le k\le n$,
either $v_{k-1}\to v_k\in E$ or $v_{k-1}\leftarrow v_k\in E$.  Two
distinct nodes $u$ and $v$ are then said to be \emph{d-separated} by a
set $S \subset V \setminus \{v, u\}$ if every path from $u$ to $v$
contains three consecutive nodes $(v_{k-1},v_k,v_{k+1})$ for which one
of the following is true:
\begin{enumerate}
\item[(i)] The three nodes form a chain $v_{k-1} \to v_{k} \to v_{k+1}$, a
  chain $v_{k-1} \leftarrow v_{k} \leftarrow v_{k+1}$, or a fork
  $v_{k-1} \leftarrow v_{k} \to v_{k+1}$, and the middle node $v_k$ is
  in $S$.
\item[(ii)] The three nodes form a collider $v_{k-1} \to v_{k} \leftarrow
  v_{k+1}$, and neither $v_k$ nor any of its descendants is in $S$.
\end{enumerate}
Suppose $A, B, S$ are pairwise disjoint subsets of $V$.  Then $S$
d-separates $A$ and $B$ if $S$ d-separates any pair of nodes $a$ and
$b$ with $a \in A$ and $b \in B$.  Finally, the joint distribution of
the random vector $X=(X_v)_{v \in V}$ is \emph{Markov} with respect to
a DAG $G$ if $X_A$ and $X_B$ are conditionally independent given $X_S$
for any triple of pairwise disjoint subsets $A, B, S \subset V$ such
that $S$ d-separates $A$ and $B$ in $G$.  Here, $X_A$ denotes the
subvector $(X_v)_{v\in A}$.  It is customary to denote conditional
independence of $X_A$ and $X_B$ given $X_S$ by $X_A\ci X_B\,\vert\,
X_S$.

We will be concerned with the consistency of an algorithm for
inferring a DAG from data.  Graph inference is complicated by the fact
that two DAGs $G=(V,E)$ and $H=(V,F)$ with the same vertex set $V$ may
be \emph{Markov equivalent}, that is, they may possess the same
d-separation relations and, consequently, induce the same statistical
model.
To give an example, the graphs $u \to v \to w$ and $u\leftarrow
v\leftarrow w$ are Markov equivalent, but $u \to v \to w$ and $u\to
v\leftarrow w$ are not.  As first shown in \cite{verma:1991}, two DAGs
$G$ and $H$ are Markov equivalent if and only if they have the same
skeleton and the same unshielded colliders.  The \emph{skeleton} of a
digraph $G$ is the undirected graph obtained by converting each
directed edge into an undirected edge.  An \emph{unshielded collider}
is a triple of nodes $(u,v,w)$ that induces the subgraph $u\to
v\leftarrow w$, that is, there is no edge between $u$ and $w$.

Let $[G]$ be the Markov equivalence class of an acyclic digraph
$G=(V,E)$.  Write $E(H)$ for the edge set of a DAG $H$, and define the
edge set
\[
[E] = \bigcup_{H\in[G]} E(H).
\]
That is, $(v,w)\in[E]$ if there exists a DAG $H\in[G]$ with the edge
$v\to w$ in its edge set.  We interpret the presence of both $(v,w)$
and $(w,v)$ in $[E]$ as an undirected edge between $v$ and $w$.
Following the most closely related literature, we call the graph
$C(G)=(V,[E])$ the \emph{completed partially directed acyclic graph}
(CPDAG) for $G$, but other terminology such as the \emph{essential
  graph} is in use.  The graph $C(G)$ is partially directed as it may
contain both directed and undirected edges, and it is acyclic in the
sense of its directed subgraph having no directed cycles.  Two DAGs
$G$ and $H$ satisfy $C(G)=C(H)$ if and only if $[G]=[H]$, making the
CPDAG a useful graphical representation of a Markov equivalence class;
see \cite{andersson:1997,chickering:2002}.

The PC algorithm, named for its inventors Peter Spirtes and Clark
Glymour, uses conditional independence tests to infer a CPDAG from
data \cite{spirtes:2000}.  In its population version, the algorithm
amounts to a clever scheme to reconstruct the CPDAG $C(G)$ from
answers to queries about d-separation relations in the underlying DAG
$G$.  Theorem~\ref{thm:pc-background} summarizes the properties of the
PC algorithm that are relevant for the present paper.  For a proof of
the theorem as well as a compact description of the PC algorithm we
refer the reader to \cite{Kalisch:2007}.  Recall that the degree of a
node is the number of edges it is incident to, and that the degree of
a DAG $G$ is the maximum degree of any node, which we denote by $\deg(G)$.

\begin{theorem}
  \label{thm:pc-background}
  Given only the ability to check d-separation relations in a DAG $G$,
  the PC algorithm finds the CPDAG $C(G)$ by checking whether pairs of
  distinct nodes are d-separated by sets $S$ of cardinality $|S|\le
  \deg(G)$.
\end{theorem}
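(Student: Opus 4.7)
The plan is to reconstruct $C(G)$ in three phases, each with a clearly delimited role: (i) recover the skeleton of $G$; (ii) identify the unshielded colliders; (iii) propagate the remaining orientations by deterministic rules. Only phases (i) and (ii) consult the d-separation oracle, so the conditioning-set bound $|S|\le\deg(G)$ need only be established there.

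For phase (i), the core fact is that, in any DAG, two non-adjacent nodes $u$ and $v$ can always be d-separated by the parent set of one of them. Since a DAG has no directed cycles, either $v\notin\de_G(u)$ or $u\notin\de_G(v)$; the local Markov property then yields $u\ci v\,\vert\,\pa_G(u)$ in the first case and $u\ci v\,\vert\,\pa_G(v)$ in the second. Both parent sets have cardinality at most $\deg(G)$. The algorithm starts from the complete undirected graph and, for $\ell=0,1,\dots,\deg(G)$, removes every edge $\{u,v\}$ for which some subset $S$ of size $\ell$ drawn from the current neighbors of $u$ (or of $v$) d-separates $u$ and $v$, recording the witnessing $S_{uv}$. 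An induction on $\ell$ shows that true edges of $G$ are never deleted, so the current neighborhood of $u$ always contains $\pa_G(u)$; combined with the core fact, this ensures that every non-edge of $G$ is discovered by the time $\ell$ reaches $\deg(G)$, and no conditioning set of larger size is ever tested.

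For phase (ii), inspect each unshielded triple $(u,v,w)$ in the recovered skeleton and orient it as $u\to v\leftarrow w$ precisely when $v\notin S_{uw}$. Correctness is the standard dichotomy: if $u\to v\leftarrow w$ is present in $G$, then every d-separator of $u$ and $w$ must exclude $v$, while if the triple is a non-collider in $G$ the separator $S_{uw}$, which phase (i) draws from $\pa_G(u)$ or $\pa_G(w)$, necessarily contains $v$. No further d-separation queries are made. Phase (iii) then applies Meek's orientation rules, which direct an edge whenever the opposite direction would create a directed cycle or a new unshielded collider; this step is purely combinatorial, uses no oracle calls, and is known to return exactly $C(G)$.

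The main obstacle is the inductive argument underlying phase (i): one must show simultaneously that (a) true adjacencies are never mistakenly removed, so that conditioning on current neighborhoods of bounded size remains sufficient, and (b) a separating subset of the current neighborhood is actually found once $\ell$ reaches $|\pa_G(u)|$ or $|\pa_G(v)|$. Because the algorithm searches only within neighborhoods of the evolving graph rather than over all subsets of $V\setminus\{u,v\}$, it is this interplay between the graph being thinned and the sets being searched on which the tight cardinality bound $|S|\le\deg(G)$ rests.
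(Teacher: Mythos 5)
Your proof is correct and follows the standard route---skeleton recovery via parent-set separators of size at most $\deg(G)$, collider orientation from the recorded separating sets, then Meek's rules---which is exactly the argument behind the proof the paper cites from \cite{Kalisch:2007}; the paper itself offers no proof of this background theorem. One minor inaccuracy that does not affect correctness: in phase (ii) the recorded set $S_{uw}$ is merely some separating subset of a current neighborhood, not necessarily $\pa_G(u)$ or $\pa_G(w)$, but the collider test is still sound because \emph{any} d-separator of $u$ and $w$ must contain the middle node $v$ of an unshielded non-collider (otherwise the two-edge path through $v$ remains active).
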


The joint distribution of a random vector $X=(X_v)_{v \in V}$ is
\emph{faithful} to the DAG $G$ if, for any triple of pairwise disjoint
subsets $A, B, S \subset V$, we have that $S$ d-separates $A$ and $B$
in $G$ if and only if $X_A\ci X_B\,\vert\, X_S$.  Under faithfulness,
statistical tests of conditional independence can be used to determine
d-separation relations in a DAG and lead to a sample version of the
PC algorithm that is applicable to data.

If $X$ follows the multivariate normal distribution
$\mathit{N}(\mu,\Sigma)$, with positive definite covariance matrix
$\Sigma$, then
\begin{equation}
  \label{eq:ci-gauss1}
  X_A\ci X_B\,\vert\, X_S \quad\iff\quad
  X_u\ci X_v\,\vert\, X_S \quad\forall\, u\in A,\: v\in B.
\end{equation}
Moreover, the pairwise conditional independence of $X_u$ and $X_v$ given $X_S$
is equivalent to the vanishing of the \emph{partial correlation}
$\rho_{uv\vert S}$, that is, the correlation obtained from the
bivariate normal conditional distribution of $(X_u,X_v)$ given $X_S$.
The iterations of the PC algorithm make use of the recursion
\begin{equation}\label{PartialWithRecursion}
  \rho_{uv \vert S} = \frac{\rho_{uv \vert S \setminus w} - \rho_{uw
  \vert S \setminus w} \rho_{vw \vert S \setminus w}}{\sqrt{\left(1 -
  \rho_{uw \vert S \setminus w}^2\right) \left(1 - \rho_{vw \vert S
  \setminus w}^2\right)}}, 
\end{equation}
where $w \in S$, and we define $\rho_{uv \vert \emptyset} = \rho_{uv}$
to be correlation of $u$ and $v$.  Our later theoretical analysis will
use the fact that 
\begin{equation} 
  \label{PartialWithInverses}
  \rho_{uv \vert S} = - \frac{\Psi^{-1}_{uv}}{\sqrt{\Psi^{-1}_{uu}
  \Psi^{-1}_{vv}}}, 
\end{equation}
where $\Psi = \Sigma_{(u,v,S),(u,v,S)}$ is the concerned principal
submatrix of $\Sigma$.  A natural estimate of $\rho_{uv\vert S}$ is
the sample partial correlation obtained by replacing $\Sigma$ with the
empirical covariance matrix of available observations.  Sample partial
correlations derived from independent normal observations have
favorable distributional properties \cite[Chap.~4]{anderson:2003},
which form the basis for the work of \cite{Kalisch:2007} who treat the
PC algorithm in the Gaussian context with conditional independence
tests based on sample partial correlations.  The main results in
\cite{Kalisch:2007} show high-dimensional consistency of the PC
algorithm, when the observations form a sample of independent normal
random vectors that are faithful to a suitably sparse DAG.

The purpose of this paper is to show that the PC algorithm has
high-dimensional consistency properties for a broader class of
distributions, when standard Pearson-type empirical correlations are
replaced by rank-based measures of correlations in tests of
conditional independence.  The broader class we consider comprises the
distributions with Gaussian copula.  Phrased in the terminology of
\cite{Liu:2009}, we consider \emph{nonparanormal} distributions.
Recall that a correlation matrix is a covariance matrix with all
diagonal entries equal to one.

\begin{definition}
  \label{def:npn}
  Let $f=(f_v)_{v \in V}$ be a collection of strictly increasing, but
  not necessarily continuous functions $f_v:\mathbb{R} \to
  \mathbb{R}$, and let $\Sigma \in \mathbb{R}^{V \times V}$ be a
  positive definite correlation matrix.  The nonparanormal
  distribution $\mathit{NPN}(f, \Sigma)$ is the distribution of the
  random vector $(f_v(Z_v))_{v \in V}$ for $(Z_v)_{v\in V} \sim
  \mathit{N}(0, \Sigma)$.
\end{definition}

Taking the functions $f_v$ to be affine shows that all multivariate
normal distributions are also nonparanormal.  If
$X\sim\mathit{NPN}(f,\Sigma)$, then the univariate marginal
distribution for a coordinate, say $X_v$, may have any continuous
cumulative distribution function $F$, as we may take $f_v = F^{-1}
\circ \Phi$, where $\Phi$ is the standard normal distribution
function.  

\begin{definition}
  \label{def:npn-model}
  The Gaussian copula graphical model $\mathit{NPN}(G)$ associated
  with a DAG $G$ is the set of all distributions
  $\mathit{NPN}(f,\Sigma)$ that are Markov with respect to $G$.
\end{definition}

Since the marginal transformations $f_v$ are deterministic,
the dependence structure in a nonparanormal distribution corresponds
to that in the underlying latent multivariate normal distribution.  In
other words, if $X \sim \mathit{NPN}(f, \Sigma)$ and
$Z\sim\mathit{N}(0,\Sigma)$, then it holds for any triple of pairwise
disjoint sets $A,B,S\subset V$ that
\begin{equation}
  X_A\ci X_B \,\vert\, X_S \;\iff\;
  Z_A\ci Z_B \,\vert\, Z_S.
\end{equation}
Hence, for two nodes $u$ and $v$ and a separating set $S \subset V
\setminus \{u, v\}$, it holds that
\begin{equation}
  \label{eq:npn-ci}
  X_u \ci X_v \,\vert\, X_S \;\iff\; 
  \rho_{uv \vert S} = 0,
\end{equation}
with $\rho_{uv\vert S}$ calculated from $\Sigma$ as in
(\ref{PartialWithRecursion}) or (\ref{PartialWithInverses}).  In light
of this equivalence, we will occasionally speak of a correlation
matrix $\Sigma$ being Markov or faithful to a DAG, meaning that the
requirement holds for any distribution
$\mathit{NPN}(f,\Sigma)$.

In the remainder of the paper we study the PC algorithm in the
nonparanormal context, proposing the use of Spearman's rank
correlation and Kendall's $\tau$ for estimation of the correlation
matrix parameter of a nonparanormal distribution.  In
Section~\ref{sec:rank-correlation}, we review how transformations of
Spearman's rank correlation and Kendall's $\tau$ yield accurate
estimators of the latent Gaussian correlations.  In particular, we
summarize tail bounds from \cite{Liu:2012}.
Theorem~\ref{ErrorProbsForPC} in Section~\ref{sec:rank-PC} gives our
main result, an error bound for the output of the PC algorithm when
correlations are used to determine nonparanormal conditional
independence.  In Corollary~\ref{ConsistencyOfPC}, we describe
high-dimensional asymptotic scenarios and suitable conditions that
lead to consistency of the PC algorithm.  The proof of
Theorem~\ref{ErrorProbsForPC} is given in Section~\ref{sec:proofs}.
Our simulations in Section~\ref{sec:simulations} make a strong case
for the use of rank correlations in the PC algorithm.  Some concluding
remarks are given in Section~\ref{sec:conclusion}.

\section{Rank correlations}
\label{sec:rank-correlation}

Let $(X,Y)$ be a pair of random variables, and let $F$ and $G$ be the
cumulative distribution functions of $X$ and $Y$, respectively.
Spearman's $\rho$ for the bivariate distribution of $(X,Y)$ is defined
as
\begin{equation}
  \rho^S = \text{Corr}\left(F(X), G(Y)\right),
\end{equation}
that is, it is the ordinary Pearson correlation between the quantiles
$F(X)$ and $G(Y)$.  Another classical measure of correlation is
Kendall's $\tau$, defined as
\begin{equation}
  \tau = \text{Corr}\left(\text{sign}\left(X - X'\right),
  \text{sign}\left(Y - Y'\right)\right) 
\end{equation}
where $(X',Y')$ is an independent copy of $(X, Y)$.  

Suppose $(X_1, Y_1), \ldots (X_n, Y_n)$ are independent pairs of
random variables, each pair distributed as $(X,Y)$.  Let
$\text{rank}(X_i)$ be the rank of $X_i$ among $X_1, \ldots, X_n$.  In the
nonparanormal setting, the marginal distributions are continuous so
that ties occur with probability zero, making ranks well-defined.  The
natural estimator of $\rho^S$ is  the sample correlation among
ranks, that is, 
\begin{align}\label{SpearmanEstimate}
  \hat\rho^S &= \frac{\frac{1}{n} \sum_{i = 1}^{n}
    \left(\frac{\text{rank}(X_i)}{n + 1} - \frac{1}{2}\right)
    \left(\frac{\text{rank}(Y_i)}{n + 1} -
      \frac{1}{2}\right)}{\sqrt{ \frac{1}{n}\sum_{i = 1}^{n}
        \left(\frac{\text{rank}(X_i)}{n + 1} - \frac{1}{2}\right)^2}
      \sqrt{
      \frac{1}{n}\sum_{i = 1}^{n} \left(\frac{\text{rank}(Y_i)}{n +
            1} - \frac{1}{2}\right)^2}}\\
  &=\label{FastSpearmanEstimate}
  1 - \frac{6}{n\left(n^2 - 1\right)}\sum_{i =
    1}^{n}\left(\text{rank}(X_i) - \text{rank}(Y_i)\right)^2,
\end{align}
which can be computed in $O(n \log n)$ time.  Kendall's $\tau$ may be
estimated by
\begin{equation}
  \hat{\tau} = \frac{2}{n\left(n-1\right)} \sum_{1 \le i < j \le n}
  \text{sign}\left(X_i - X_j\right) \text{sign}\left(Y_i -
  Y_j\right). 
\end{equation}
A clever algorithm using sorting and binary trees to compute
$\hat{\tau}$ in time $O(n \log n)$ instead of the naive $O(n^2)$ time 
has been developed by \cite{Christensen:2005}.

It turns out that simple trigonometric transformations of $\hat\rho^S$
and $\hat\tau$ are excellent estimators of the population Pearson
correlation for multivariate normal data. In particular,
\cite{Liu:2012} show that if $(X, Y)$ are bivariate normal with
$\text{Corr}(X, Y) = \rho$, then
\begin{equation}\label{SpearmanProbBound}
  \mathbb{P}\left(\left| 2 \sin\left(\frac{\pi}{6} \hat\rho^S\right) - \rho
  \right| > \epsilon \right) \;\le\; 2 \exp \left(-\frac{2}{9 \pi^2} n
  \epsilon^2 \right) 
\end{equation}
and
\begin{equation}\label{KendallProbBound}
  \mathbb{P}\left(\left| \sin\left(\frac{\pi}{2} \hat{\tau}\right) - \rho
  \right| > \epsilon \right) \;\le\; 2 \exp \left(-\frac{2}{\pi^2} n
  \epsilon^2 \right) .
\end{equation}

Clearly, $\hat{\rho}^S$ and $\hat{\tau}^K$ depend on the observations
$(X_1, Y_1), \ldots (X_n, Y_n)$ only through their ranks.  Since ranks
are preserved under strictly increasing functions,
\eqref{SpearmanProbBound} and \eqref{KendallProbBound} still hold if
$(X, Y)\sim\mathit{NPN}(f,\Sigma)$ with Pearson correlation
$\rho=\Sigma_{xy}$ in the underlying
latent bivariate normal distribution.  Throughout the rest of this
paper, we will assume that we have some estimator $\hat{\rho}$ of
$\rho$ which has the property that, for nonparanormal data,
\begin{equation}
  \label{eq:generic-corr-estimator}
  \mathbb{P}(\left| \hat{\rho} - \rho \right| > \epsilon) \;<\; A \exp \left(-B n
  \epsilon^2 \right) 
\end{equation}
for fixed constants $0 < A, B < \infty$.  As just argued, the
estimators considered in \eqref{SpearmanProbBound} and
\eqref{KendallProbBound} both have this property. 

When presented with multivariate observations from a distribution
$\mathit{NPN}(f,\Sigma)$, we apply the estimator
from~(\ref{eq:generic-corr-estimator}) to every pair of coordinates to
obtain an estimator $\hat\Sigma$ of the correlation matrix parameter.
Plugging $\hat\Sigma$ into \eqref{PartialWithRecursion} or
equivalently into \eqref{PartialWithInverses} gives partial
correlation estimators that we denote $\hat\rho_{uv\vert S}$.

\section{Rank PC algorithm}
\label{sec:rank-PC}

Based on the equivalence~(\ref{eq:npn-ci}), we may use the rank-based
partial correlation estimates $\hat\rho_{uv\vert S}$ to test
conditional independences.  In other words, we conclude that
\begin{equation}\label{CITest}
  X_u \ci X_v \vert X_S \quad\iff\quad \left| \hat{\rho}_{uv \vert S}
  \right| \le \gamma,
\end{equation}
where $\gamma \in \left[0, 1\right]$ is a fixed threshold.  We will
refer to the PC algorithm that uses the conditional independence tests
from~(\ref{CITest}) as the `Rank PC' (RPC) algorithm.  We write $\hat
C_{\gamma}(G)$ for the output of the RPC algorithm with tuning parameter
$\gamma$.

The RPC algorithm consist of two parts.  The first part computes the
correlation matrix $\hat{\Sigma} = (\hat{\rho}_{uv})$ in time $O(p^2 n
\log n)$, where $p:=|V|$.  This computation takes $O(\log n)$ longer
than its analogue under use of Pearson correlations.  The second part
of the algorithm is independent of the type of correlations involved.
It determines partial correlations and performs graphical operations.
For an accurate enough estimate of a correlation matrix $\Sigma$ that
is faithful to a DAG $G$, this second part takes $O(p^{\deg(G)})$ time
in the worst case, but it is often much faster; compare
\cite{Kalisch:2007}.  For high-dimensional data with $n$ smaller than
$p$, the computation time for RPC is dominated by the second part, the
PC-algorithm component.  Moreover, in practice, one may wish to apply
RPC for several different values of $\gamma$, in which case the
estimate $\hat\Sigma$ needs to be calculated only once. As a result,
Rank PC takes only marginally longer to compute than Pearson PC for
high-dimensional data.

What follows is our main result about the correctness of RPC, which
we prove in Section~\ref{sec:proofs}.  For a correlation matrix
$\Sigma\in\mathbb{R}^{V\times V}$, let 
\begin{equation}\label{RegularityAssumption0}
  c_{\min}(\Sigma) := \min \left\{| \rho_{uv \vert S}| \::\:   u,v\in
    V,\, S\subseteq V\setminus\{u,v\},\,\rho_{uv
      \vert S} \ne 0\right\}
\end{equation}
be the minimal magnitude of any non-zero partial correlation, and let
$\lambda_{\min}(\Sigma)$ be the minimal eigenvalue.  Then for any
integer $q\ge 2$, let
\begin{align}\label{RegularityAssumption}
  c_{\min}(\Sigma,q) &:= \min \left\{\, c_{\min}(\Sigma_{I,I})\::\: I
    \subseteq V,\, |I| = 
    q\,\right\}, \quad\text{and}\\
\label{EigMinAssumption}
  \lambda_{\min}(\Sigma,q) &:= \min\left\{
  \lambda_{\min}(\Sigma_{I,I}) \::\: I \subseteq V,\, |I| =
    q \,\right\}
\end{align}
be the minimal magnitude of a non-zero partial correlation and,
respectively, the minimal eigenvalue of any $q\times q$ principal
submatrix of $\Sigma$.  Note that if $I\subset J$ then
$c_{\min}(\Sigma_{I,I})\le c_{\min}(\Sigma_{J,J})$ and
$\lambda_{\min}(\Sigma_{I,I})\le \lambda_{\min}(\Sigma_{J,J})$.

\begin{theorem}[Error bound for RPC-algorithm]
  \label{ErrorProbsForPC} 
  Let $X_1,\dots, X_n$ be a sample of independent observations drawn
  from a nonparanormal distribution $\mathit{NPN}(f,\Sigma)$ that is
  faithful to a DAG $G$ with $p$ nodes.  For $q:=\deg(G) + 2$, let
  $c:=c_{\min}(\Sigma,q)$ and $\lambda:= \lambda_{\min}(\Sigma,q)$.
  If $n> q$, then there exists a threshold $\gamma \in 
  [0, 1]$ for which
  \begin{equation*}
    \mathbb{P}\big(\,\hat{C}_{\gamma}(G) \ne C(G)\,\big) \;\le\; \frac{A}{2} p^2 \exp
    \left(-\frac{B \lambda^4 n c^2}{36 q^2} \right),
  \end{equation*}
  where $0<A,B<\infty$ are the constants from~(\ref{eq:generic-corr-estimator}).
\end{theorem}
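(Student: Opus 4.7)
The plan is to reduce the event $\{\hat{C}_\gamma(G)\ne C(G)\}$ to a failure of one of the conditional independence tests in (\ref{CITest}), and then control the probability of such a failure by bounding how errors in the entries of the estimated correlation matrix propagate to partial correlations. I would set the threshold $\gamma = c/2$ and define the ``good event'' $E$ on which every pairwise correlation estimate satisfies $|\hat\rho_{xy}-\rho_{xy}|\le \epsilon$ with $\epsilon := \lambda^2 c/(6q)$. On $E$, the task is to show that every partial correlation $\hat\rho_{uv\vert S}$ with $|S|\le \deg(G)=q-2$ lies within $c/2$ of its population version. Once this is established, a true zero partial correlation produces an estimate of magnitude $\le c/2=\gamma$, while any non-zero population partial correlation has magnitude at least $c$ and hence yields an estimate of magnitude at least $c/2=\gamma$. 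Theorem~\ref{thm:pc-background} then guarantees the RPC output is exactly $C(G)$ on $E$, since only d-separating sets of size at most $\deg(G)$ are ever queried.

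For the probability of $E^c$, I would apply the tail bound~(\ref{eq:generic-corr-estimator}) to each of the at most $p^2/2$ pairs and take a union bound, producing
\[
\mathbb{P}(E^c) \;\le\; \frac{A}{2}\, p^2 \exp\!\left(-B n \epsilon^2\right) \;=\; \frac{A}{2}\, p^2 \exp\!\left(-\frac{B\lambda^4 n c^2}{36 q^2}\right),
\]
which already matches the bound stated in the theorem. Thus the whole proof rests on the deterministic perturbation claim described above.

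The heart of the argument, and the main obstacle, is the perturbation lemma: entrywise control of $\hat\Psi-\Psi$ for any principal $q\times q$ submatrix $\Psi=\Sigma_{(u,v,S),(u,v,S)}$ must translate into control of $\hat\rho_{uv\vert S}-\rho_{uv\vert S}$ through formula~(\ref{PartialWithInverses}). The steps I would carry out are: (i) pass from max-norm to spectral norm via $\|\hat\Psi-\Psi\|_2\le q\|\hat\Psi-\Psi\|_{\max}\le q\epsilon$; (ii) use $\lambda_{\min}(\Psi)\ge \lambda$ together with the identity $\hat\Psi^{-1}-\Psi^{-1}=-\hat\Psi^{-1}(\hat\Psi-\Psi)\Psi^{-1}$ and a Neumann-type bound to obtain $\|\hat\Psi^{-1}-\Psi^{-1}\|_2 \lesssim q\epsilon/\lambda^2$, provided $\epsilon$ is small enough that $\lambda_{\min}(\hat\Psi)\ge \lambda/2$; (iii) use the lower bound $\Psi^{-1}_{uu}\ge 1/\lambda_{\max}(\Psi)\ge 1/q$ (since $\Sigma$ is a correlation matrix so $\mathrm{tr}(\Psi)=q$) to guarantee that the denominator $\sqrt{\Psi^{-1}_{uu}\Psi^{-1}_{vv}}$ in~(\ref{PartialWithInverses}) is bounded away from zero and that the same holds for $\hat\Psi^{-1}$; (iv) apply the elementary inequality $|\hat a/\hat b - a/b|\le (|\hat a-a| + |a/b|\,|\hat b-b|)/\hat b$ to convert entrywise perturbations of $\hat\Psi^{-1}$ into perturbations of the ratio, using $|\rho_{uv\vert S}|\le 1$.

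Tracking the constants through (i)--(iv) is where care is needed: the $q$ from the norm conversion, the $\lambda^{-2}$ from inverting, and the bookkeeping in (iii)--(iv) must combine to show that the choice $\epsilon=\lambda^2 c/(6q)$ suffices to guarantee $|\hat\rho_{uv\vert S}-\rho_{uv\vert S}|\le c/2$. Verifying that the constants indeed fit with the factor $36q^2$ in the exponent is the routine but error-prone part I expect to be the main obstacle; conceptually, nothing beyond standard matrix perturbation theory and the faithfulness-based lower bound $c$ on non-zero partial correlations is required.
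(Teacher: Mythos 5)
Your overall architecture is exactly the paper's: set $\gamma=c/2$, define a good event where all entries of $\hat\Sigma$ are within $\epsilon$ of $\Sigma$, propagate that error through matrix inversion (max-norm to spectral norm, Neumann-series bound with $\lambda_{\min}\ge\lambda$), and finish with a union bound over $\binom{p}{2}$ pairs; your $\epsilon=\lambda^2c/(6q)$ is in fact a lower bound for the paper's choice $\epsilon=c\lambda^2/((4+c)q+\lambda cq)$, so the probability side matches. The gap is in your steps (iii)--(iv), and it is not mere bookkeeping. You lower-bound the diagonal entries of $\Psi^{-1}$ by $1/\lambda_{\max}(\Psi)\ge 1/q$, but the correct and much stronger fact --- the paper's Lemma~\ref{InverseDiagonals}, proved by a Schur complement: $\sigma^{ii}=1/(1-b^TA^{-1}b)\ge 1$ for any positive definite \emph{correlation} matrix --- is that these diagonal entries are at least $1$. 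With only the $1/q$ bound, your ratio inequality $|\hat a/\hat b-a/b|\le(|\hat a-a|+|a/b|\,|\hat b-b|)/\hat b$ multiplies the entrywise error $\delta\approx q\epsilon/\lambda^2$ by a factor of order $q$ from the $1/\hat b$ term, and $|\hat b-b|$ itself picks up a factor of order $q/\lambda$ because the diagonal entries of $\Psi^{-1}$ can be as large as $1/\lambda$ while $\hat b+b$ is only known to be of order $1/q$. Chasing these factors forces $\epsilon\lesssim c\lambda^3/q^3$ rather than $c\lambda^2/(6q)$, and the resulting exponent would be of order $\lambda^6 nc^2/q^6$, not the claimed $\lambda^4 nc^2/(36q^2)$.

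Even after importing the bound $\sigma^{ii}\ge 1$, your elementary inequality in (iv) still loses a factor of order $\lambda^{-1/2}$ when one diagonal entry of $\Psi^{-1}$ is large, because $|\hat b-b|$ scales with the size of those entries. The paper's Lemma~\ref{2rOver1-r} avoids this by splitting the perturbed ratio into a term bounded using $a_{11},a_{22}\ge 1$ and a term in which $a_{12}$ is bounded by $\sqrt{a_{11}a_{22}}$, so the large diagonal entries cancel and one gets the clean, dimension- and eigenvalue-free bound $2\delta/(1-\delta)$. To close your argument you need both ingredients: the Schur-complement lower bound of $1$ on the diagonal of the inverse correlation matrix, and the cancellation via $a_{12}^2<a_{11}a_{22}$ in the ratio perturbation. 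As written, the proposal does not establish the stated constant $36q^2$.
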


We remark that while all subsets of size $q$ appear in the
definitions in~(\ref{RegularityAssumption}) and
(\ref{EigMinAssumption}), our proof of Theorem~\ref{ErrorProbsForPC}
only requires the corresponding minima over those principal
submatrices that are actually inverted in the run of the PC-algorithm.

From the probability bound in Theorem~\ref{ErrorProbsForPC}, we may
deduce high-dimensional consistency of RPC.  For two positive
sequences $(s_n)$ and $(t_n)$, we write $s_n=O(t_n)$ if 
$s_n \le M t_n$, and $s_n=\Omega(t_n)$ if $s_n\ge M t_n$ for a constant
$0<M<\infty$.

\begin{corollary}[Consistency of RPC-algorithm]\label{ConsistencyOfPC}
  Let $(G_n)$ be a sequence of DAGs.  Let $p_n$ be the number of nodes
  of $G_n$, and let $q_n=\deg(G_n)+2$.  Suppose $(\Sigma_n)$ is a
  sequence of $p_n\times p_n$ correlation matrices, with $\Sigma_n$
  faithful to $G_n$. Suppose further that there are constants $0\le
  a,b,d,f<1$ that govern the growth of the graphs as
  \begin{align*}
    \log p_n &= O(n^a), & q_n &= O(n^b),
  \end{align*}
  and minimal signal strengths and eigenvalues as
  \begin{align*}
    c_{\min}(\Sigma_n,q_n)&=\Omega(n^{-d}), & \lambda_{\min}(\Sigma_n,q_n) &= \Omega(n^{-f}).
  \end{align*}
  If $a+2b +2d + 4f < 1$, then there exists a sequence of
  thresholds $\gamma_n$ for which
  \[
  \lim_{n \rightarrow \infty}
  \mathbb{P}\big(\,\hat C_{\gamma_n}(G_n) = C(G_n)\,\big) = 1,
  \]
  where $\hat C_{\gamma_n}(G_n)$ is the output of the RPC algorithm for a
  sample of independent observations $X_1,\dots,X_n$ from a
  nonparanormal distribution $\mathit{NPN}(\,\cdot\,,\Sigma_n)$.
\end{corollary}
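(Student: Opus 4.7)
The plan is to apply Theorem~\ref{ErrorProbsForPC} at each sample size $n$, choose $\gamma_n$ to be any threshold satisfying the conclusion of that theorem, and show that the resulting error bound vanishes under the stated growth conditions. Writing $c_n := c_{\min}(\Sigma_n, q_n)$ and $\lambda_n := \lambda_{\min}(\Sigma_n, q_n)$, Theorem~\ref{ErrorProbsForPC} supplies a $\gamma_n \in [0,1]$ with
\[
\mathbb{P}\big(\hat C_{\gamma_n}(G_n) \ne C(G_n)\big) \;\le\; \frac{A}{2}\, p_n^2 \exp\!\left(-\frac{B\,\lambda_n^4\, n\, c_n^2}{36\, q_n^2}\right),
\]
so the entire task reduces to showing that this upper bound tends to zero.

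Taking logarithms, it suffices to prove that
\[
2\log p_n \;-\; \frac{B\,\lambda_n^4\, n\, c_n^2}{36\, q_n^2} \;\longrightarrow\; -\infty.
\]
The leading positive term is $O(n^a)$ by hypothesis. For the negative term, I would substitute the assumed rates $q_n^2 = O(n^{2b})$, $\lambda_n^4 = \Omega(n^{-4f})$, and $c_n^2 = \Omega(n^{-2d})$ into the fraction to obtain
\[
\frac{\lambda_n^4\, n\, c_n^2}{q_n^2} \;=\; \Omega\!\left(n^{\,1 - 2b - 2d - 4f}\right).
\]
Setting $\alpha := 1 - 2b - 2d - 4f$, the hypothesis $a + 2b + 2d + 4f < 1$ is equivalent to $\alpha > a \ge 0$. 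Hence the negative contribution grows like a positive power $n^\alpha$ that strictly dominates the $O(n^a)$ growth of $2\log p_n$, forcing the log-bound to $-\infty$ and the error probability to $0$.

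The main obstacle really lies in Theorem~\ref{ErrorProbsForPC} itself, which already packages the concentration inequalities for $\hat\rho_{uv\mid S}$ and the graphical bookkeeping for the PC algorithm. In the corollary, the only points requiring care are (i) that the threshold $\gamma_n$ is allowed to depend on $n$, which is permitted since Theorem~\ref{ErrorProbsForPC} asserts existence of a suitable threshold at each sample size, and (ii) that the $O$/$\Omega$ constants in the four growth hypotheses combine without interaction, which is immediate because they affect only the multiplicative constant in front of $n^\alpha$ and are absorbed before comparing exponents of $n$.
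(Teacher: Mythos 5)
Your proposal is correct and follows essentially the same route as the paper: both invoke Theorem~\ref{ErrorProbsForPC} to obtain a threshold $\gamma_n$ at each sample size, substitute the assumed rates for $p_n$, $q_n$, $c_n$, and $\lambda_n$ into the error bound, and observe that the exponent $-\,\Omega(n^{1-2b-2d-4f})$ dominates the $O(n^a)$ contribution of $p_n^2$ precisely when $a+2b+2d+4f<1$. Your write-up is merely a more explicit version of the paper's one-line computation (it also implicitly handles the $n>q_n$ requirement, which holds for large $n$ since $b<1$).
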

\begin{proof}
  By Theorem \ref{ErrorProbsForPC}, for large enough $n$, we can pick
  a threshold $\gamma_n$ such that
  \begin{equation}
    \mathbb{P}(\hat{C}_{\gamma_n}(G_n) \ne C(G_n) \le A' \exp \left(2n^{a}-B' n^{1 - 2b-2d - 4f } \right)
  \end{equation}
  for constants $0<A',B'<\infty$.  The bound goes to zero
  if $1-2b - 2d - 4f > a$.
\end{proof}

As previously mentioned, \cite{Kalisch:2007} prove a similar
consistency result in the Gaussian case.  Whereas our proof consists
of propagation of errors from correlation to partial correlation
estimates, their proof appeals to Fisher's result that under
Gaussianity, sample partial correlations follow the same type of
distribution as sample correlations when the sample size is adjusted
by subtracting the cardinality of the conditioning set
\cite[Chap.~4]{anderson:2003}.  It is then natural to work with a
bound on the partial correlations associated with small conditioning
sets.  More precisely, \cite{Kalisch:2007} assume that there is a
constant $0\le M<1$ such that for any $n$, the partial correlations
$\rho_{uv\vert S}$ of the matrix $\Sigma_n$ satisfy
\begin{equation} \label{BoundedPartials}
 | \rho_{uv \vert S}| \le M \qquad \forall \: u,v\in V,\;
 S\subseteq V\setminus\{u,v\},\, |S| \le q_n.
\end{equation}
It is then no longer necessary to involve the minimal eigenvalues
from~(\ref{EigMinAssumption}).  The work in \cite{Kalisch:2007} is
thus free of an analogue to our constant $f$.  Stated for the case of
polynomial growth of $p_n$ (with $a=0$), their result gives
consistency when $b+2d<1$, whereas our condition requires $2b+2d<1$
even if $f=0$.  (Note that our constant $b$ corresponds to $1-b$ in
\cite{Kalisch:2007}.)

In the important special case of bounded degree, however, our
nonparanormal result is just as strong as the previously established
Gaussian consistency guarantee.  Staying with polynomial growth of
$p_n$, i.e., $a=0$, suppose the sequence of graph degrees $\deg(G_n)$
is indeed bounded by a fixed constant, say $q_0-2$.  Then clearly, $b
= 0$.  Moreover, the set of correlation matrices of size $q_0$
satisfying \eqref{BoundedPartials} with $q_n= q_0$ is compact.  Since
the smallest eigenvalue is a continuous function, the infimum of all
eigenvalues of such matrices is achieved for some invertible matrix.
Hence, the smallest eigenvalue is bounded away from zero, and we
conclude that $f = 0$.  Corollary~\ref{ConsistencyOfPC} thus implies
consistency if $2d <1$, or if $d < \frac{1}{2} = \frac{1-b}{2}$,
precisely as in \cite{Kalisch:2007}.  (No generality is lost by
assuming $a=0$; in either one of the compared results this constant is
involved solely in a union bound over order $p^2$ events.)

\section{Proof of the error bound}
\label{sec:proofs}

In this section, we prove the error bound in
Theorem~\ref{ErrorProbsForPC}.  Our argument starts from a uniform
bound on the error in our estimate $\hat\Sigma$.  Then we analyze how
this error propagates to the partial correlation estimates
$\hat\rho_{uv\vert S}$, giving again a uniform error bound.  We begin
by proving three lemmas about the error propagation.

The first lemma invokes classical results on error propagation in matrix
inversion.  
Let $\lVert A \rVert$ denote the spectral norm of a matrix
$A=(a_{ij})=\mathbb{R}^{q\times q}$, that is, $\lVert A \rVert^2$ is
the maximal eigenvalue of $A^TA$.  
Write the $l_\infty$ vector
norm of $A$ as
\[
\lVert A\rVert_\infty = \max_{1\le i,j\le q} |a_{ij}|.
\]

\begin{lemma}[Errors in matrix inversion]\label{ErrorBoundsForInverse}
  Suppose $\Sigma \in \mathbb{R}^{q \times q}$ is an invertible matrix
  with minimal eigenvalue $\lambda_{\min}$.  If $E \in \mathbb{R}^{q
    \times q}$ is a matrix of errors with $\lVert E\rVert_\infty <
  \epsilon < \lambda_{\min}/q$, then $\Sigma + E$ is invertible and
  \begin{equation*}
    \lVert \left(\Sigma + E\right)^{-1} - \Sigma^{-1} \rVert_\infty
     \le \frac{q \epsilon /\lambda_{\min}^2}{1 - q \epsilon /\lambda_{\min}}.
  \end{equation*}
\end{lemma}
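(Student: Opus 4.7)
The plan is to reduce the entrywise bound to a spectral-norm computation, then apply the standard Neumann-series argument for perturbed matrix inversion. The key preliminary observation is that for any $q\times q$ matrix $A$ one has
\[
\lVert A\rVert_\infty \;\le\; \lVert A\rVert \;\le\; q\,\lVert A\rVert_\infty,
\]
the left inequality from $|A_{ij}|=|e_i^T A e_j|\le \lVert A\rVert$ and the right from the Frobenius bound $\lVert A\rVert\le\lVert A\rVert_F\le q\lVert A\rVert_\infty$. In particular, the hypothesis $\lVert E\rVert_\infty<\epsilon$ upgrades to $\lVert E\rVert<q\epsilon$, and since $\Sigma$ is a (symmetric) principal submatrix of a correlation matrix we have $\lVert\Sigma^{-1}\rVert=1/\lambda_{\min}$.

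Next I would show $\Sigma+E$ is invertible. Writing $\Sigma+E=\Sigma(I+\Sigma^{-1}E)$, it suffices to check that $\lVert\Sigma^{-1}E\rVert<1$. But
\[
\lVert \Sigma^{-1}E\rVert \;\le\; \lVert\Sigma^{-1}\rVert\cdot\lVert E\rVert \;<\; \frac{q\epsilon}{\lambda_{\min}} \;<\; 1
\]
by the assumption $\epsilon<\lambda_{\min}/q$, so $I+\Sigma^{-1}E$ is invertible by a standard Neumann-series argument.

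Then I would expand the difference of inverses as
\[
(\Sigma+E)^{-1}-\Sigma^{-1} \;=\; \big[(I+\Sigma^{-1}E)^{-1}-I\big]\Sigma^{-1} \;=\; \sum_{k\ge 1}(-\Sigma^{-1}E)^{k}\,\Sigma^{-1},
\]
and bound this in spectral norm by the geometric series
\[
\lVert (\Sigma+E)^{-1}-\Sigma^{-1}\rVert \;\le\; \frac{\lVert\Sigma^{-1}E\rVert}{1-\lVert\Sigma^{-1}E\rVert}\cdot\lVert\Sigma^{-1}\rVert \;\le\; \frac{q\epsilon/\lambda_{\min}}{1-q\epsilon/\lambda_{\min}}\cdot\frac{1}{\lambda_{\min}}.
\]
The claim then follows from $\lVert\cdot\rVert_\infty\le\lVert\cdot\rVert$.

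I do not anticipate a serious obstacle; the only thing to be careful about is not losing a factor of $q$ by confusing the entrywise and spectral norms, which is precisely why the hypothesis is stated with the factor $q$ in $\epsilon<\lambda_{\min}/q$. Monotonicity of $t/(1-t)$ on $[0,1)$ ensures the geometric series bound reaches exactly the stated form without any slack being wasted.
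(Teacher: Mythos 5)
Your proposal is correct and follows essentially the same route as the paper: the same norm equivalence $\lVert A\rVert_\infty \le \lVert A\rVert \le q\lVert A\rVert_\infty$, the same invertibility criterion via $\lVert \Sigma^{-1}E\rVert < q\epsilon/\lambda_{\min} < 1$, and the same final perturbation bound, which the paper obtains by citing inequality (5.8.2) of Horn and Johnson rather than writing out the Neumann series as you do. The only point worth noting is that the identity $\lVert\Sigma^{-1}\rVert = 1/\lambda_{\min}$ tacitly uses symmetry (positive definiteness) of $\Sigma$, which you correctly flag and which the paper also assumes implicitly since the lemma is only applied to principal submatrices of correlation matrices.
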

\begin{proof}
  First, note that 
  \begin{equation}
    \label{eq:norm-equiv}
    \|E\|_\infty\le \|E\|\le q\|E\|_\infty; 
  \end{equation}
  see entries $(2,6)$ and $(6,2)$ in the table on p.~314 in
  \cite{Horn:1990}.  Using the submultiplicativity of a matrix norm, the second
  inequality in~(\ref{eq:norm-equiv}), and our assumption on
  $\epsilon$, we find that
  \begin{equation}
    \label{eq:mx-invert-2}
    \lVert E\Sigma^{-1}\rVert\le \lVert \Sigma^{-1}\rVert\cdot \lVert
    E\rVert < \frac{ q \epsilon}{\lambda_{\min}} < 1.
  \end{equation}
  As discussed in \cite[Sect.~5.8]{Horn:1990}, this implies that $I+E\Sigma^{-1}$ and thus also $\Sigma+E$ is invertible.  Moreover, 
  by the first inequality in~(\ref{eq:norm-equiv}) and inequality
  (5.8.2) in \cite{Horn:1990}, we obtain that
  \begin{equation}
    \label{eq:mx-invert-1}
    \lVert \left(\Sigma + E\right)^{-1} - \Sigma^{-1} \rVert_\infty\le
    \lVert \left(\Sigma + E\right)^{-1} - \Sigma^{-1} \rVert \le
    \lVert \Sigma^{-1}\rVert\cdot \frac{\lVert E\Sigma^{-1}\rVert}{1-\lVert
      E\Sigma^{-1}\rVert}.
  \end{equation}
  Since the function $x\mapsto x/(1-x)$ is increasing for $x<1$, our
  claim follows from the fact that $\lVert
  \Sigma^{-1}\rVert=1/\lambda_{\min}$ and the inequality $\lVert
  E\Sigma^{-1}\rVert < q \epsilon/\lambda_{\min}$
  from~(\ref{eq:mx-invert-2}).
\end{proof}

\begin{lemma}[Diagonal of inverted correlation matrix]\label{InverseDiagonals}
  If $\Sigma\in\mathbb{R}^{q\times q}$ is a positive definite
  correlation matrix, then the diagonal entries of
  $\Sigma^{-1}=(\sigma^{ij})$ satisfy $\sigma^{ii}\ge 1$.
\end{lemma}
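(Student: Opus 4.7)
The plan is to exploit the block structure of $\Sigma$ around any given diagonal entry and apply the Schur complement formula. Fix an index $i$, and after a permutation write
\[
\Sigma = \begin{pmatrix} 1 & \Sigma_{i,-i} \\ \Sigma_{-i,i} & \Sigma_{-i,-i} \end{pmatrix},
\]
where the leading $1$ comes from $\Sigma$ being a correlation matrix. Since principal submatrices of a positive definite matrix are positive definite, $\Sigma_{-i,-i}$ is invertible, and the standard block-inversion identity yields
\[
\sigma^{ii} = \bigl(1 - \Sigma_{i,-i}\,\Sigma_{-i,-i}^{-1}\,\Sigma_{-i,i}\bigr)^{-1}.
\]

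Next I would observe that the quadratic form $\Sigma_{i,-i}\,\Sigma_{-i,-i}^{-1}\,\Sigma_{-i,i}$ is nonnegative, because $\Sigma_{-i,-i}^{-1}$ is positive definite (being the inverse of a positive definite matrix) and $\Sigma_{-i,i}$ is a real vector. Hence the denominator satisfies $1 - \Sigma_{i,-i}\,\Sigma_{-i,-i}^{-1}\,\Sigma_{-i,i} \le 1$. Combined with the fact that $\sigma^{ii} > 0$ (the diagonal of any positive definite matrix is positive, and $\Sigma^{-1}$ is itself positive definite), this forces the denominator to lie in $(0,1]$, and therefore $\sigma^{ii} \ge 1$.

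There is no real obstacle here; the only subtlety worth flagging is that one must justify the Schur complement identity, which requires the invertibility of $\Sigma_{-i,-i}$, and that the resulting Schur complement $1 - \Sigma_{i,-i}\Sigma_{-i,-i}^{-1}\Sigma_{-i,i}$ is strictly positive (so that $\sigma^{ii}$ is well-defined and finite). Both follow from the positive definiteness of $\Sigma$ via the inheritance of positive definiteness by principal submatrices and by Schur complements. An alternative phrasing, which I might include as a remark, is that $1/\sigma^{ii} = 1 - R_i^2$, where $R_i^2 \in [0,1)$ is the squared multiple correlation of coordinate $i$ with the remaining coordinates under any Gaussian distribution with covariance $\Sigma$; this interpretation makes the bound $\sigma^{ii} \ge 1$ transparent.
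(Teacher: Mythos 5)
Your proof is correct and follows essentially the same route as the paper's: isolate the chosen index by a permutation, apply the Schur complement formula to get $\sigma^{ii} = (1 - \Sigma_{i,-i}\Sigma_{-i,-i}^{-1}\Sigma_{-i,i})^{-1}$, and use positive definiteness of the principal submatrix's inverse to bound the quadratic form below by zero. The added remark on the multiple correlation interpretation is a nice touch but not needed.
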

\begin{proof}
  The claim is trivial for $q = 1$.  So assume $q \ge 2$.  By
  symmetry, it suffices to consider the entry $\sigma^{qq}$, and we
  partition the matrix as
  \begin{equation}
    \Sigma = 
      \begin{pmatrix}
       A   & b \\
       b^T & 1 \\
       \end{pmatrix}
    \end{equation}
    with $A \in \mathbb{R}^{(q - 1) \times (q - 1)}$ and $b \in
    \mathbb{R}^{q-1}$.  By the Schur complement formula for the
    inverse of a partitioned matrix,
    \[
    \sigma^{qq} = \frac{1}{1-b^TA^{-1}b};
    \]
    compare \cite[\S0.7.3]{Horn:1990}.  Since $A$ is positive
    definite, so is $A^{-1}$.  Hence, $b^TA^{-1}b\ge 0$. Since $\Sigma^{-1}$ is
    positive definite, $\sigma^{qq}$ cannot be negative, and so we deduce
    that $\sigma_{qq}\ge 1$, with equality if and only if $b=0$.
\end{proof}

The next lemma treats the error propagation from the inverse of a
correlation matrix to partial correlations.

\begin{lemma}[Error in partial correlations]\label{2rOver1-r}
  Let $A=(a_{ij})$ and $B=(b_{ij})$ be symmetric $2\times 2$ matrices.
  If $A$ is positive definite with $a_{11},a_{22}\ge 1$ and
  $\|A-B\|_\infty < \delta < 1$, then
  \[
  \bigg| \frac{a_{12}}{\sqrt{a_{11}a_{22}}} -
  \frac{b_{12}}{\sqrt{b_{11}b_{22}}} \bigg| <
  \frac{2\delta}{1-\delta}.
  \]
\end{lemma}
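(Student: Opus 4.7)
The plan is to write the target difference as a telescoping sum by inserting $a_{12}/\sqrt{b_{11}b_{22}}$ between the two ratios, giving
\[
\frac{a_{12}}{\sqrt{a_{11}a_{22}}} - \frac{b_{12}}{\sqrt{b_{11}b_{22}}}
= \frac{a_{12}}{\sqrt{a_{11}a_{22}}}\left(1 - \frac{\sqrt{a_{11}a_{22}}}{\sqrt{b_{11}b_{22}}}\right)
+ \frac{a_{12}-b_{12}}{\sqrt{b_{11}b_{22}}}.
\]
The positive definiteness of $A$ yields $|a_{12}| \le \sqrt{a_{11}a_{22}}$, so the prefactor of the first summand has modulus at most $1$. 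The hypotheses $a_{ii}\ge 1$ and $|a_{ii}-b_{ii}|<\delta<1$ force $b_{ii} > 1-\delta > 0$, hence $\sqrt{b_{11}b_{22}}>1-\delta$, and this alone bounds the second summand strictly by $\delta/(1-\delta)$.

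What remains is the ratio estimate $|1 - \sqrt{a_{11}a_{22}/(b_{11}b_{22})}| < \delta/(1-\delta)$. Setting $\eta := \delta/(1-\delta)$, the identity $a_{ii}/b_{ii} - 1 = (a_{ii}-b_{ii})/b_{ii}$ combined with $b_{ii} > 1-\delta$ yields $|a_{ii}/b_{ii} - 1| < \eta$ for $i=1,2$, while $a_{ii}/b_{ii}>0$ in any case. Multiplying the two ratios and taking a square root, one then obtains $\sqrt{a_{11}a_{22}/(b_{11}b_{22})} \in (\max\{0,1-\eta\},\, 1+\eta)$, from which $|1 - \sqrt{a_{11}a_{22}/(b_{11}b_{22})}| < \eta$ is immediate. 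Plugging the two bounds into the decomposition and applying the triangle inequality yields the stated estimate $2\delta/(1-\delta)$.

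The only step that is not essentially a one-liner is the ratio estimate, and even there the mild subtlety is only the case $\eta \ge 1$ (i.e., $\delta \ge 1/2$): the naive lower bound $(1-\eta)^2$ on the product $(a_{11}/b_{11})(a_{22}/b_{22})$ becomes non-positive, so one must instead use the automatic positivity of $a_{ii}/b_{ii}$ to conclude $\sqrt{a_{11}a_{22}/(b_{11}b_{22})} - 1 > -1 \ge -\eta$. I do not anticipate any deeper obstacle.
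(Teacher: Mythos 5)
Your proof is correct and follows essentially the same route as the paper's: split the difference into a term driven by the perturbation of the off-diagonal entry and a term driven by the perturbation of the diagonal entries, then bound each by $\delta/(1-\delta)$ using $|a_{12}|\le\sqrt{a_{11}a_{22}}$ (positive definiteness) and $a_{11},a_{22}\ge 1$. The only cosmetic difference is that your multiplicative telescoping is symmetric in the sign of $a_{12}$ and handles the $\delta\ge 1/2$ case via positivity of $a_{ii}/b_{ii}$, whereas the paper assumes $a_{12}\ge 0$ without loss of generality and argues the two directions separately.
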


\begin{proof}
  Without loss of generality, suppose $a_{12} \ge 0$.  Since
  $\|A-B\|_\infty < \delta$,
  \begin{multline*}
    \frac{b_{12}}{\sqrt{b_{11}b_{22}}} -
      \frac{a_{12}}{\sqrt{a_{11}a_{22}}} <
      \frac{a_{12}+\delta}{\sqrt{\left(a_{11}-\delta\right)
          \left(a_{22}-\delta\right)}} 
      - \frac{a_{12}}{\sqrt{a_{11}a_{22}}} \\ 
      =
      \frac{\delta}{\sqrt{\left(a_{11}-\delta\right)
          \left(a_{22}-\delta\right)}}
      +
      a_{12}\left(\frac{1}{\sqrt{\left(a_{11}-\delta\right)
            \left(a_{22}-\delta\right)}}
      - \frac{1}{\sqrt{a_{11}a_{22}}}\right).
  \end{multline*}
  Using that $a_{11},a_{22}\ge 1$ to bound the first term and
  $a_{12}^2<a_{11}a_{22}$ to bound the second term, we obtain that
  \begin{align*}
    \frac{b_{12}}{\sqrt{b_{11}b_{22}}} -
    \frac{a_{12}}{\sqrt{a_{11}a_{22}}} &< \frac{\delta}{1-\delta} +
    \sqrt{a_{11}a_{22}}
    \left(\frac{1}{\sqrt{\left(a_{11}-\delta\right)\left(a_{22}-\delta\right)}}
      - \frac{1}{\sqrt{a_{11}a_{22}}}\right) \\ 
    &= \frac{\delta}{1-\delta} +
    \left(\sqrt{\frac{a_{11}}{a_{11}-\delta}\cdot
        \frac{a_{22}}{a_{22}-\delta}} - 1\right) .
  \end{align*}
  Since the function $x\mapsto x/(x-\delta)$ is decreasing, we may
  use our assumption that $a_{11},a_{22}\ge 1$ to get the bound
  \begin{equation*}
    \frac{b_{12}}{\sqrt{b_{11}b_{22}}} -
    \frac{a_{12}}{\sqrt{a_{11}a_{22}}} < \frac{\delta}{1-\delta} +
    \left(\sqrt{\frac{1}{1-\delta}\cdot \frac{1}{1-\delta}} - 1\right)
    = \frac{2\delta}{1-\delta}
  \end{equation*}
  A similar argument yields that
  \begin{equation}
    \frac{a_{12}}{\sqrt{a_{11}a_{22}}} -
    \frac{b_{12}}{\sqrt{b_{11}b_{22}}} < \frac{2\delta}{1+\delta}, 
  \end{equation}
  from which our claim follows.
\end{proof}

We are now ready to prove our main result.

\begin{proof}[Proof of Theorem \ref{ErrorProbsForPC}]
  We will show that our claimed probability bound for the event $\hat
  C_{\gamma}(G)\not= C(G)$ holds when the threshold in the RPC algorithm
  is $\gamma = c/2$.  By Theorem~\ref{thm:pc-background}, if
  all conditional independence tests for conditioning sets of size
  $|S|\le \deg(G)$ make correct decisions, then the output of the RPC
  algorithm $\hat C_{\gamma}(G)$ is equal to the CPDAG $C(G)$.  When
  $\gamma=c/2$, the conditional independence test accepts a
  hypothesis $X_u \ci X_v \vert X_S$ if and only if $| \hat{\rho}_{uv
    \vert S}| < \gamma = c/2$.  Hence,  the test makes a correct decision if $| \hat{\rho}_{uv \vert S} -
  \rho_{uv \vert S} | < c/2$
  because all non-zero partial correlations for $|S|\le \deg(G)$ are
  bounded away from zero by $c$; recall~(\ref{RegularityAssumption0})
  and~(\ref{RegularityAssumption}).  It remains to show, using the
  error analysis from Lemmas~\ref{ErrorBoundsForInverse} and~\ref{2rOver1-r}, that the event $| \hat{\rho}_{uv \vert S} -
  \rho_{uv \vert S} | \ge c/2$ occurs with small enough probability
  when $|S|\le \deg(G)$.
  
  Suppose our correlation matrix estimate
  $\hat\Sigma=(\hat\rho_{uv})$ satisfies $\lVert \hat\Sigma -
  \Sigma\rVert_\infty<\epsilon$ for
  \begin{equation}\label{epsdef}
    \epsilon = \frac{c \lambda^2}{(4+c) q + \lambda c q} > 0.
  \end{equation}
  Choose any two nodes $u,v\in V$ and a set $S\subseteq
  V\setminus\{u,v\}$ with $|S|\le \deg(G)=q-2$.  Let $I=\{u,v\}\cup S$
  and define $\Psi = \Sigma_{I,I}$ and $\hat{\Psi} =
  \hat{\Sigma}_{I,I}$, two principal submatrices of size at most $q$.
  For the choice of $\epsilon$ from~(\ref{epsdef}), the assumptions of
  Lemma~\ref{ErrorBoundsForInverse} hold and we deduce that
  $\hat{\Psi}$ is invertible, with
  \begin{equation}\label{eq:InverseBounds}
     \lVert \hat{\Psi}^{-1} - \hat{\Psi}^{-1} \rVert \;<\;
     \frac{q \epsilon /\lambda^2}{1 - q \epsilon /\lambda} 
    = \frac{q c}{(4 + c) q + \lambda c q - \lambda c q} 
    = \frac{c}{4+c}.
  \end{equation}
  By Lemma \ref{InverseDiagonals}, all diagonal entries of $\Psi^{-1}$
  are equal to one or greater, and so we can apply
  Lemma~\ref{2rOver1-r} with (\ref{eq:InverseBounds}).  Letting
  $\delta=c/(4+c)$, we get that
  \begin{equation*}
    | \hat{\rho}_{uv\vert S} - \rho_{uv \vert S} | \;=\; 
    \left| \frac{\hat{\Psi}^{-1}_{uv}}{\sqrt{\hat{\Psi}^{-1}_{uu}
    \hat{\Psi}^{-1}_{vv}}} -
    \frac{\Psi^{-1}_{uv}}{\sqrt{\Psi^{-1}_{uu} \Psi^{-1}_{vv}}}
    \right|  
    \;<\; \frac{2 \delta}{1-\delta} 
    = \frac{c}{2}.
  \end{equation*}
  Therefore, $\lVert \hat\Sigma - \Sigma\rVert_\infty<\epsilon$
  implies that our tests decide all conditional independences
  correctly in the RPC algorithm.
  
  Next, using~(\ref{eq:generic-corr-estimator}) and a union bound, we
  find that
  \begin{align*}
    \mathbb{P}\left(\hat C_{\gamma}(G) \ne C(G)\right) &\le
    \mathbb{P}\left( | \hat{\Sigma}_{uv} - \Sigma_{uv}|\ge \epsilon\:
      \text{ for some } u,v \in V \right) \\
    &\le A \frac{p(p-1)}{2} \exp \left(-B n \epsilon^2 \right) .
  \end{align*}
  Plugging in the definition of $\epsilon$ gives the claimed inequality
  \begin{equation*}
    \mathbb{P}\left(\hat C_{\gamma}(G) \ne C(G)\right) \le
    \frac{A}{2}p^2 \exp \left(-\frac{B \lambda^4 n c^2}{\left((4+c) q + \lambda c q\right)^2} \right) 
    \le \frac{A}{2} p^2 \exp \left(-\frac{B \lambda^4 n c^2}{36 q^2}
    \right)
  \end{equation*}
  because $c \le 1$ and $\lambda \le 1$.  The inequality $c\le 1$
  holds trivially because partial correlations are in $[-1,1]$.  The
  inequality $\lambda\le 1$ holds because a $q\times q$ correlation
  matrix has trace $q$, this trace is equal to the sum of the $q$
  eigenvalues, and $\lambda$ is the minimal eigenvalue.
\end{proof}

\section{Simulations}
\label{sec:simulations}

In this section we evaluate the finite-sample performance of the RPC
algorithm in simulations.  We compare RPC to two other versions of the
PC-algorithm: (i) `Pearson-PC', by which we mean the standard approach
of using sample partial correlations to test Gaussian conditional
independences, and (ii) `$Q_n$-PC', which is based on a robust
estimator of the covariance matrix and was considered in
\cite{Kalisch:2008}.  All our computations are done with the
\texttt{pcalg} package for R \cite{pcalg}.

The Gaussian conditional independence tests in the \texttt{pcalg}
package (and other software such as \texttt{Tetrad
  IV}\footnote{http://www.phil.cmu.edu/projects/tetrad}) use a level
$\alpha \in \left[0, 1\right]$ and decide that 
\begin{equation}\label{KBCITest}
  X_u \ci X_v \vert X_S \;\iff\; \sqrt{n - \left|S\right| - 3} \left|
  \frac{1}{2} \log \left(\frac{1 + \hat{\rho}_{uv \vert S}}{1 -
  \hat{\rho}_{uv \vert S}}\right) \right| \le \Phi^{-1}\left(1 -
  \alpha/2\right). 
\end{equation}
If the observations are multivariate normal and $\hat\rho_{uv\vert S}$
are sample partial correlations then $\alpha$ is an asymptotic
significance level for the test.  The test in (\ref{KBCITest}) is
equivalent to our earlier setup of conditional independence tests in
\eqref{CITest}, with the exception of the sample size adjustment from
$n$ to $n - \left|S\right| - 3$.  This adjustment is motivated by
classical large-sample bias-correction theory for Fisher's z-transform
of sample correlations; compare \cite{anderson:2003}.  We show in the
appendix that the adjustment has no affect on the consistency result
we proved in Corollary \ref{ConsistencyOfPC}.


Following the setup of \cite{Kalisch:2007}, we simulate random DAGs
and sample from probability distributions faithful to them.  Fix a
sparsity parameter $s\in [0,1]$ and enumerate the vertices as
$V=\{1,\dots,p\}$.  Then we generate a DAG by including the edge $u\to
v$ with probability $s$, independently for each pair $(u,v)$ with $1
\le u < v \le p$.  In this scheme, each node has the same expected
degree, namely, $(p-1)s$.

Given a DAG $G=(V,E)$, let $\Lambda=(\lambda_{uv})$ be a $p\times p$
matrix with $\lambda_{uv}=0$ if $u\to v\not\in E$.  Furthermore, let
$\epsilon=(\epsilon_1,\dots,\epsilon_p)$ be a vector of independent
random variables.  Then the random vector $X$ solving the equation
system
\begin{equation}
  \label{eq:struct-eqn}
X = \Lambda X+ \epsilon
\end{equation}
is well-known to be Markov with respect to $G$.
Here, we draw the edge coefficients $\lambda_{uv}$, $u\to v\in E$,
independently from a uniform distribution on the interval $(0.1,1)$.
For such a random choice, with probability one, the vector $X$ solving
\eqref{eq:struct-eqn} is faithful with respect to $G$.
We consider three different types of data:
\begin{enumerate}
\item[(i)] multivariate normal observations, which we generate by
  taking $\epsilon$ in~(\ref{eq:struct-eqn}) to have independent
  standard normal entries;
\item[(ii)] observations from the Gaussian copula model, for which we
  transform the marginals of the normal random vectors from (i) to an
  $F_{1,1}$-distribution; 
\item[(iii)] contaminated data, for which we generate the entries of
  $\epsilon$ in~(\ref{eq:struct-eqn}) as independent draws from a
  80-20 mixture between a standard normal and a standard Cauchy
  distribution.
\end{enumerate}
The contaminated distributions in (iii) do not belong
to the nonparanormal class.  

We consider the RPC algorithm in the version that uses Spearman
correlations as in \eqref{SpearmanProbBound}; the results for
Kendall's $\tau$ are similar.  When comparing graph estimates, we use
the Structural Hamming Distance (SHD) as a measure of distance.  The
SHD is the number of edge insertions, deletions, and reorientations
required to transform one graph to another.  An undirected edge is
counted as a single edge.

For the simulations we consider each combination of
\[
p \in \{10, 22, 46, 100\} \quad\text{and}\quad
n \in \{32, 100, 316, 1000, 3162\},
\]
and choose the expected degree as either 3 or 6.  In each case, we
draw 240 random graphs and then generate samples of $n$ observations.
For the tuning parameter in (\ref{KBCITest}), we consider a fixed grid,
namely,
\begin{equation*}
  \log_{10} \alpha \in\{
  -7, -6, -5, -4.25, -3.5, -2.75, -2, -1.5, -1, -0.75\}.
\end{equation*}
For each of the resulting combinations, we run each of the three
considered versions of the PC algorithm, retaining the result for the
best choice among 7 values for $\alpha$, best in terms of lowest
average SHD to the true underlying DAG for a given combination.  In
Figures~\ref{fig:normal},~\ref{fig:F11} and \ref{fig:Cauchymixture}, we
plot the resulting SHDs against the sample size $n$.

\begin{figure}[t]
  \centering
  \includegraphics[scale=0.65]{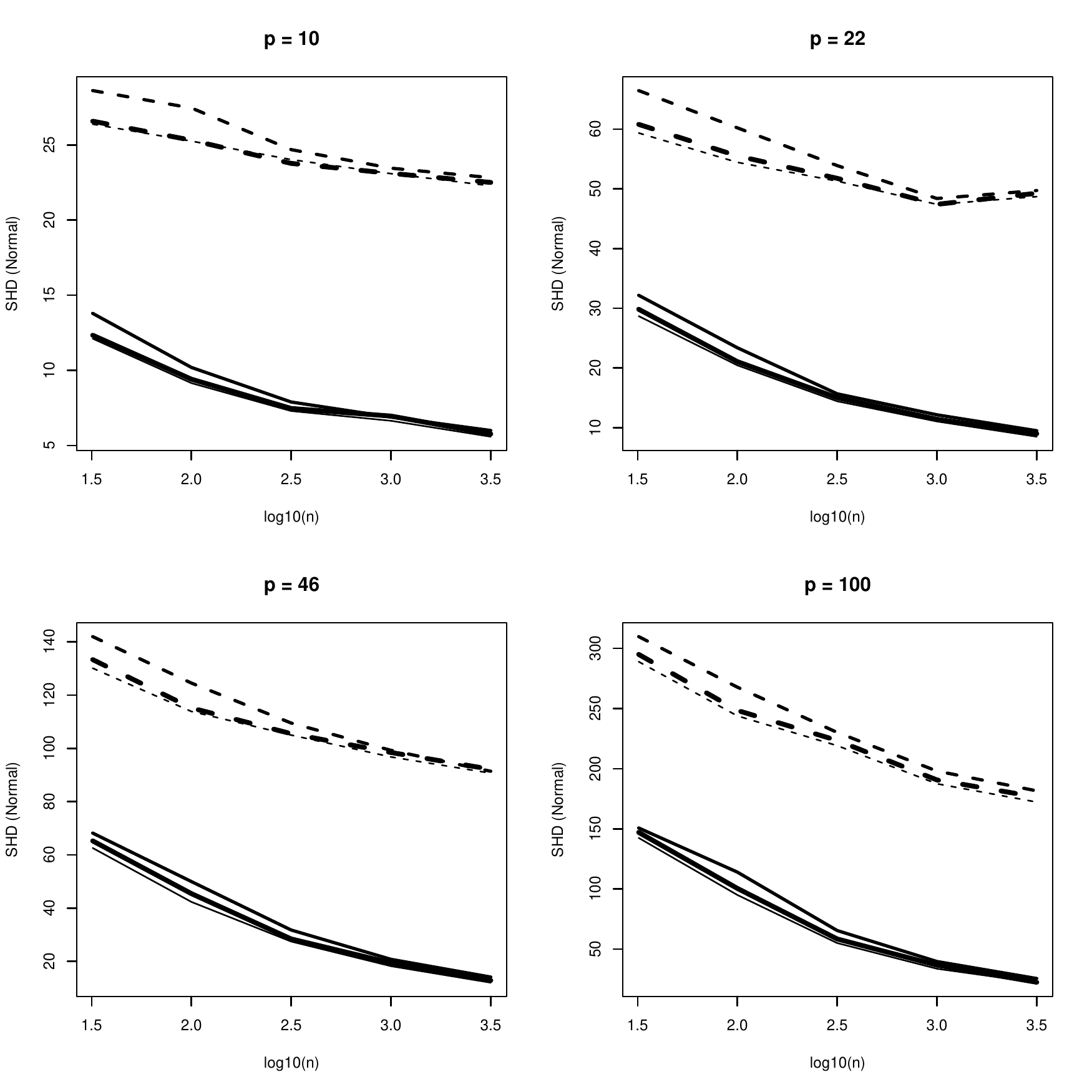} 
  \caption{  \label{fig:normal}
    Structural Hamming distances for normal data, graphs with expected
    degree 3 (solid lines) and 6 (dotted lines), and three versions of
    the PC algorithm: Pearson-PC (thin lines), $Q_n$-PC (medium lines)
    and RPC using Spearman's $\rho$ (thick lines).}
\end{figure}

\begin{figure}[t]
  \centering
  \includegraphics[scale=0.65]{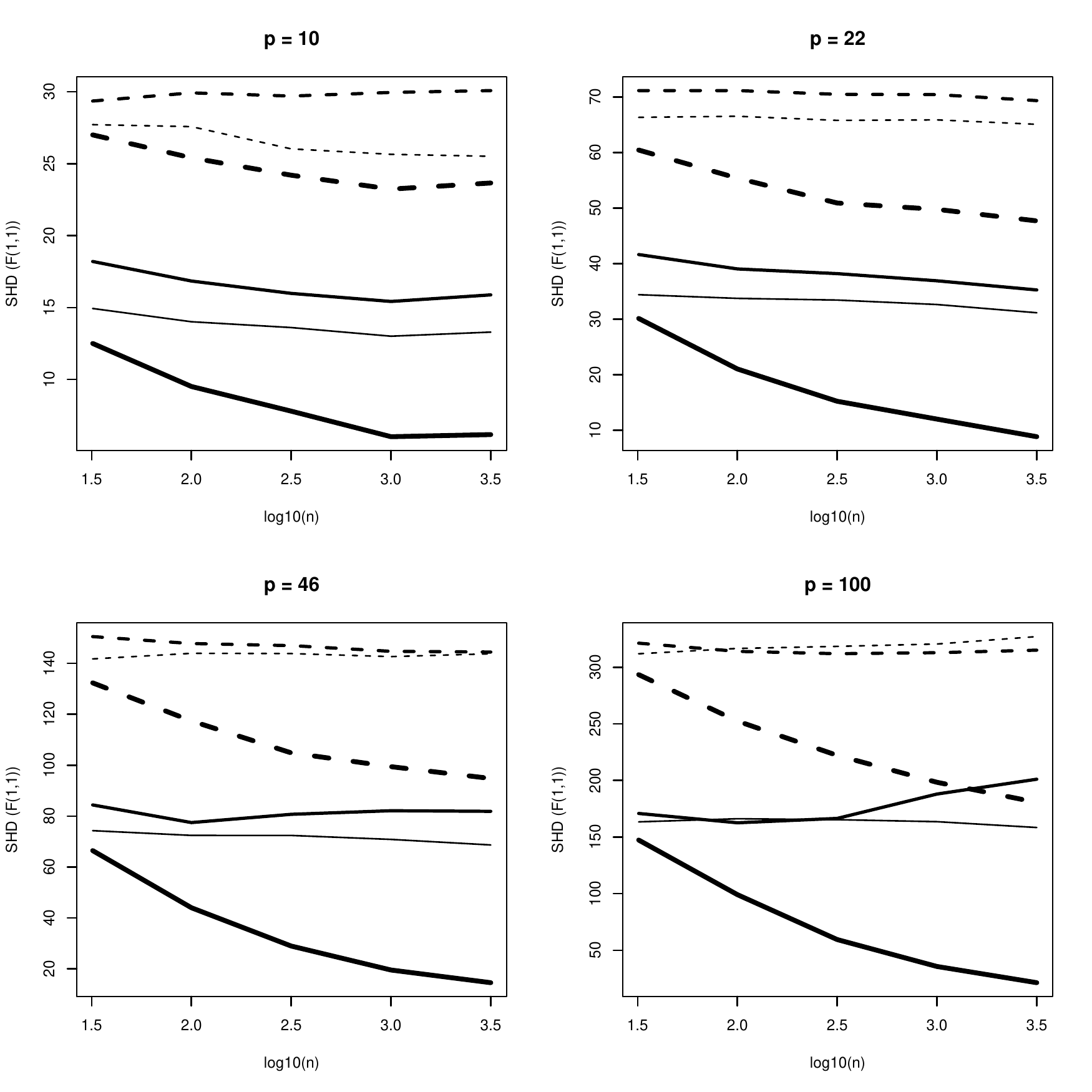} 
  \caption{  \label{fig:F11}
    Structural Hamming distances for Gaussian copula data with
    $F_{1,1}$ marginals, graphs with expected degree 3 (solid lines) and
    6 (dotted lines), and three versions of the PC algorithm:
    Pearson-PC (thin lines), $Q_n$-PC (medium lines) and RPC using
    Spearman's $\rho$ (thick lines).}
\end{figure}

\begin{figure}[t]
  \centering
  \includegraphics[scale=0.65]{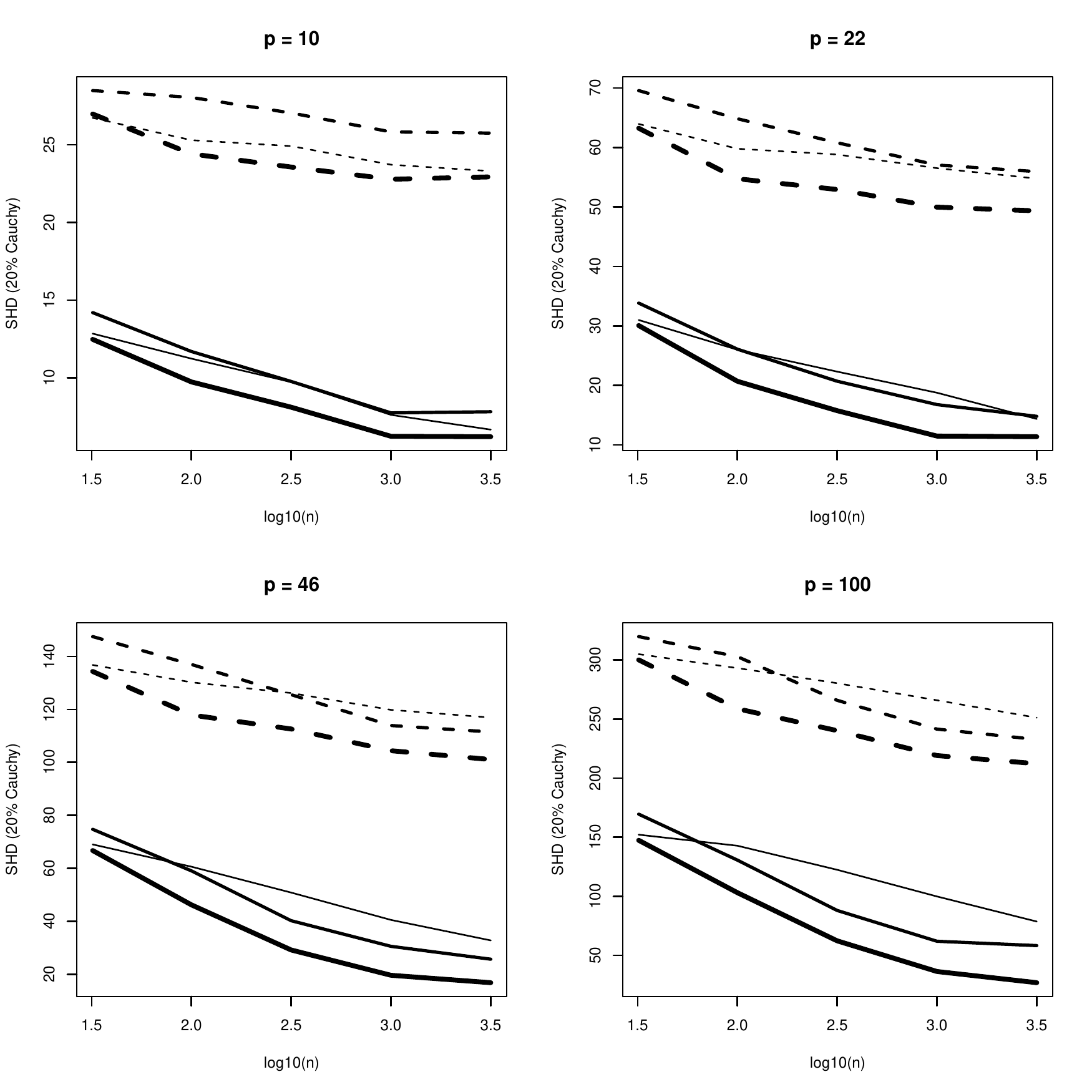} 
  \caption{  \label{fig:Cauchymixture}
    Structural Hamming distances for contaminated data, graphs with
    expected degree 3 (solid lines) and 6 (dotted lines), and three
    versions of the PC algorithm: Pearson-PC (thin lines), $Q_n$-PC
    (medium lines) and RPC using Spearman's $\rho$ (thick lines).}
\end{figure}

A clear message emerges from the plots.  First,
Figure~\ref{fig:normal} shows that for normal data, RPC performs only
marginally worse than Pearson-PC.  The $Q_n$-PC algorithm also does
well, although some gap in SHD arises for small sample sizes.  Second,
Figure~\ref{fig:F11} shows a dramatic gain in performance for RPC for
the Gaussian copula data with $F_{1,1}$ marginals.  In fact, the SHD
associated with the other two graph estimators is comparable to that
of estimating the graph to always be empty.  The expected SHD between
the empty graph and a graph on $p$ nodes with expected degree $d$ is
simply the expected number of edges in our random graphs, which is
$pd/2$.  For our choices of $d=3$ and $d=6$, the respective expected
SHD is 150 and 300 when $p=100$.  Finally,
Figure~\ref{fig:Cauchymixture} shows that RPC outperforms $Q_n$-PC for
the contaminated data; $Q_n$-PC outperforms Pearson-PC for larger
choices of $p$.


\section{Conclusion}
\label{sec:conclusion}

The PC algorithm of \cite{spirtes:2000} addresses the problem of model
selection in graphical modelling with directed graphs via a clever
scheme of testing conditional independences.  For multivariate normal
observations, the algorithm is known to have high-dimensional
consistency properties when conditional independence is tested using
sample partial correlations \cite{Kalisch:2007}.  We show that the PC
algorithm retains these consistency properties when observations
follow a Gaussian copula model and rank-based measures of correlation
are used to assess conditional independence.  The assumptions needed
in our analysis are no stronger than those in prior Gaussian work when
the considered sequence of DAGs has bounded degree.  When the degree
grows our assumptions are slightly more restrictive as our proof
requires control of the conditioning of principal submatrices of
correlation matrices that are inverted to estimate partial
correlations in the rank-based PC (RPC) algorithm.

Our simulations show that for normal data the RPC algorithm does
essentially as well as the sample correlation-based version of the
algorithm.  As can be expected, we see RPC retain this performance for
Gaussian copula data, for which sample correlations are poorly suited.
Somewhat surprisingly, RPC also performed better than a previously
considered robust version of the PC algorithm under a contamination
model.  We remark that the consistency theory available for this
robust version is for a fixed graph size $p$.  Since rank correlations
take only marginally longer to compute than sample correlations,
hardly any downsides are associated with making 
RPC the standard version of the PC algorithm for continuous data.  

In our work on consistency, the data-generating distribution is
assumed to be faithful to an underlying DAG.  In fact, our results
make the stronger assumption that non-zero partial correlations are
sufficiently far from zero.  As shown in \cite{Uhler:2012}, this can
be a restrictive assumption, which provides an explanation for why
consistency does not `kick-in' quicker in our simulation study.

Finally, we remark that extensions of the PC algorithm exist to deal
with situations in which some causally relevant variables remain
unobserved.  Such algorithms infer a more complex graphical object;
compare \cite{spirtes:2000} and \cite{colombo:2012}.  It is reasonable
to expect the use of rank correlations to be beneficial in those
settings as well, and a study of these algorithms would be an
interesting topic for future work.

\section*{Acknowledgments}

Mathias Drton was
supported by the NSF under Grant No.~DMS-0746265 and by an Alfred P. Sloan
Fellowship.

\begin{appendix}
\section{Sample size adjustment}

We now show that the consistency result in
Corollary~\ref{ConsistencyOfPC} still holds when using the conditional
independence tests from \eqref{KBCITest}.  In these tests, the sample
size is adjusted from $n$ to $n-|S|-3$.

\begin{proof}
  The test in \eqref{KBCITest} accepts a conditional independence
  hypothesis if and only if
  \begin{equation}\label{KBCITestEquiv}
    | \hat{\rho}_{uv \vert S} |\; \le\; \gamma(n,|S|,z), 
  \end{equation}
  where
  \begin{equation}\label{cutoffdef}
    \gamma(n,|S|, z) =
    \frac{\exp\big(z/\sqrt{n - |S| - 3}\big) -
    1}{\exp\big(z/\sqrt{n - |S| - 3}\big) + 1} 
  \end{equation}
  and $z = z(\alpha) = 2\Phi^{-1}(1 - \alpha/2)$.  We need to find a
  sequence $(\alpha_n)$ of values for $\alpha$ such that consistency
  holds under the scaling assumptions made in
  Corollary~\ref{ConsistencyOfPC}.  We will do this by specifying a
  sequence $(z_n)$ for values for the (doubled) quantiles $z$.
  
  
  We claim that the RPC algorithm using the tests
  from~(\ref{KBCITestEquiv}) is consistent when choosing the quantile
  sequence
  \begin{equation}
    \label{eq:z_n-choice}
  z_n = \sqrt{n - 3}\cdot
  \log\left(\frac{1 + c_n/3}{1 - c_n/3}\right),
  \end{equation}
  where we use the abbreviation
  \[
  c_n := c_{\min}(\Sigma_n,q_n).
  \]
  We will show that as the sample size $n$ tends to infinity, with
  probability tending to one, $| \hat{\rho}_{uv \vert S} -
    \rho_{uv \vert S} | < c_n/3$ for every $u, v \in V$ and $|S|
  \le q_n$.  Furthermore, we will show that for the above choice of
  $z_n$ and all sufficiently large $n$, we have $c_n/3 \le \gamma(n,
  |S|, z_n) \le 2 c_n/3$ for each relevant set $S$ with $0 \le | S|
  \le q_n$.  These two facts imply that, with asymptotic probability
  one, every conditional independence test is correct, and the RPC
  algorithm succeeds.
  
  First, we slightly adapt the proof of Theorem~\ref{ErrorProbsForPC}.
  Choosing the uniform error threshold for the correlation estimates
  as
  \begin{equation}\label{c3epsdef}
    \epsilon = \frac{c \lambda^2}{(6+c) q + \lambda c q} > 0
  \end{equation}
  in place of \eqref{epsdef} yields that, with probability at least
  \begin{equation}
    \label{eq:app:prob-bound}
  1 - \frac{A}{2} p^2 \exp \left(-\frac{B \lambda^4 n c^2}{64 q^2} \right),
  \end{equation}
  we have that $| \hat{\rho}_{uv \vert S} - \rho_{uv \vert S} | < c/3$
  for every $u, v \in V$ and $|S| \le q$.  When substituting $p_n$,
  $q_n$, $c_n$ and $\lambda_{\min}(\Sigma_n,q_n)$ for $p$, $q$, $c$
  and $\lambda$, respectively, the scaling assumptions in
  Corollary~\ref{ConsistencyOfPC} imply that the probability bound
  in~(\ref{eq:app:prob-bound}) tends to one as $n\to\infty$, and we
  obtain the first part of our claim.

  
  
  For the second part of our claim, note that our choice of $z_n$
  in~(\ref{eq:z_n-choice}) gives $\gamma(n,0,z_n) = c_n/3$.  Since
  $\gamma(n, |S|, z)$ is monotonically
  increasing in $|S|$, we need only show that for
  sufficiently large $n$,
  \begin{equation*}
    \gamma(n, q_n, z_n) - \gamma(n, 0, z_n) \le c_n/3.
  \end{equation*}
  For $x\ge 0$, the function
  \begin{equation*}
    f(x) = \frac{\exp(x) - 1}{\exp(x) + 1}
  \end{equation*}
  is concave and, thus, for any $q_n\ge 0$,
  \begin{align}
    \nonumber
    \gamma(n, q_n, z_n) - \gamma(n, 0, z_n) &=
    f\left(\frac{z}{\sqrt{n - q_n - 3}}\right) -
    f\left(\frac{z}{\sqrt{n - 3}}\right) \\  
    \label{eq:linear-bound}
    &\le f'\left(\frac{z}{\sqrt{n - 3}}\right) \left(\frac{z}{\sqrt{n
          - q_n - 3}} - \frac{z}{\sqrt{n - 3}}\right).
  \end{align}
  The derivative of $f$ is
  \[
  f'(x) = \frac{2\exp(x)}{\left(\exp(x) + 1\right)^2}.
  \]
  Evaluating the right hand side of~(\ref{eq:linear-bound}), we obtain
  that
  \begin{align}
    \nonumber
    \gamma(n, q_n, z_n) - \gamma(n, 0, z_n) 
    &\le   \frac{1}{2}\left(1-\frac{c_n^2}{9}\right)
    \log\left(\frac{1 + c_n/3}{1 - c_n/3}\right)
    \left(\frac{\sqrt{n-3}}{\sqrt{n - q_n - 3}} - 1\right) \\ 
    &\le \frac{1}{2} \log\left(\frac{1 + c_n/3}{1 - c_n/3}\right)
    \left(\frac{\sqrt{n-3}}{\sqrt{n - q_n - 3}} - 1\right).
    \label{Gap}
  \end{align}
  Being derived from absolute values of partial correlations, the
  sequence $c_n$ is in $[0,1]$.  Now, $\log[(1 + x)/(1-x)]$ is a
  convex function of $x\ge0$ that is zero at $x=0$ and equal to
  $\log(2)$ for $x=1/3$.  Therefore,
  \[
  \frac{1}{2}\log\left(\frac{1 + c_n/3}{1 - c_n/3}\right) \;\le\; 
  \frac{1}{2}\log(2) \cdot c_n, \qquad c_n\in[0,1].
  \]
  This shows that the bound in~(\ref{Gap}) is $o(c_n)$ because, by
  assumption, $q_n=o(\sqrt{n})$.  In particular, the bound
  in~\eqref{Gap} is less than $c_n/3$ for sufficiently large $n$,
  proving the claimed consistency result.
\end{proof}
\end{appendix}

\bibliographystyle{amsalpha} 
\bibliography{copula_pc}

\end{document}